\newtheorem{theorem}{Theorem}[section]
\newtheorem{corol}[theorem]{Corollary}
\theoremstyle{definition}
\newtheorem*{proc}{Procedure}
\theoremstyle{remark}
\newtheorem*{erem}{Remark}
\numberwithin{equation}{section}
\def\hB{\bar{\mathbf B}}	\def\hA{\bar{\mathbf A}}
\def\sbe{\subseteq}		\def\spe{\supseteq}
\def\sb{\subset}
\def\pa{\mathop\mathrm{par}}
\def\ed{\mathop\mathrm{edim}}
\def\chr{\mathop\mathrm{char}}
\def\al{\alpha}	\def\be{\beta}	\def\th{\theta}	\def\la{\lambda}
\def\gM{\mathfrak m}	\def\fK{\mathbf k}
\def\bA{\mathbf A}	\def\bB{\mathbf B}	\def\bC{\mathbf C}
\def\bD{\mathbf D}	\def\bJ{\mathbf J}	\def\bK{\mathbf K}
\def\bL{\mathbf L}	\def\bS{\mathbf S}	\def\kI{\mathcal I}
\def\8{\infty}		\def\hom{\mathop\mathrm{Hom}\nolimits}
\def\iff{if and only if }
\begin{document}

\subjclass[2000]{Primary 13C05, Secondary 16G60}
\title{Cubic rings and their ideals}
\author{Yuriy A. Drozd}
\address{Institute of Mathematics, National Academy of Sciences of Ukraine, Tereschenkivska 3, 01601 Kyiv, Ukraine}
\email{drozd@imath.kiev.ua}
\urladdr{www.imath.kiev.ua/$\sim$drozd}
 \author{Ruslan V. Skuratovskii}
\address{Kyiv National Taras Shevchenko University, Department of Mechanics and Mathematics, Volodymyrska 60, 01601 Kyiv, Ukraine}
\email{ruslcomp@mail.ru}
\begin{abstract}
  We give an explicit description of cubic rings over a discrete valuation ring, as well as a description of all ideals of such rings. 
\end{abstract}

\maketitle

\section*{Introduction}

 Ideals of local rings have been studied by a lot of authors from quite different viewpoints. One of the questions that arise with this respect is on the \emph{number of parameters} $\pa(\bC)$ defining the ideals of such a ring $\bC$ up to isomorphism, especially when it is reduced and of Krull dimension $1$. Certainly, it makes sense if the residue field $\fK$ is infinite. In \cite{dr} it was shown that $\pa(\bC)=0$, i.e. $\bC$ has a finite number of ideals (up to isomorphism), \iff $\bC$ is \emph{Cohen--Macaulay finite}, i.e. has a finite number of indecomposable non-isomorphic Cohen--Macaulay modules (in the $1$-dimensional reduced case they coincide with torsion free modules). Then Schappert \cite{sch} proved that a plane curve singularity has at most $1$-parameter families of ideals \iff it dominates one of the \emph{strictly unimodal} plane curve singularities in the sense of \cite{wa}, or, the same, \emph{unimodal} and \emph{bimodal} plane curve singularities in the sense of \cite{avg}. In \cite{dg} this result was generalized to all curve singularities. Note that this time $\pa(\bC)=1$ does not imply that $\bC$ is \emph{Cohen--Macaulay tame}, i.e. has at most $1$-dimensional families of indecomposable Cohen--Macaulay modules. Tameness means that $\bC$ dominates a singularity of type $T_{pq}$ \cite{dg2}. The case $\pa(\bC)>1$ had not been studied before the second author described the one branch singularities of type $W$ such that $\pa(\bC)\le2$ \cite{sk}.

 In this paper we study the \emph{cubic rings}. We describe all such rings, their ideals and, in particular, establish the value $\pa(\bC)$ for any cubic ring $\bC$. As a consequence, we show that a cubic ring is Gorenstein \iff it is a plane curve singularity (i.e. its embedding dimension equals $2$).
  
\section{Generalities}
\label{s1}

 We denote by $\bD$ a discrete valuation ring with the ring of fractions $\bK$, the maximal ideal $\gM=t\bD$ and the residue field $\fK=\bD/t\bD$. A \emph{cubic ring} over $\bD$ is, by definition, a $\bD$-subalgebra $\bC$ in a $3$-dimensional semisimple $\bK$-algebra $\bL$, which is a free $\bD$-module of rank $3$. We also denote $\bA$ the integral closure of $\bD$ in $\bL$ and always suppose that $\bA$ is finitely generated as $\bC$-module. Equivalent condition (see, for instance, \cite{max}): the $\gM$-adic completion $\hat\bC$ of the ring $\bC$ has no nilpotent elements. It is always the case if the algebra $\bL$ is \emph{separable}, for instance, if $\chr\bK=0$. We also set $\bA_m=t^m\bA+\bD$ and $\bJ_m=t\bA_{m-1}=\mathrm{rad}\,\bA_m\ (m>0)$. 

 In what follows, an \emph{ideal} means a \emph{fractional $\bC$-ideal} in $\bK$, i.e. a finitely generated $\bC$-submodule $M\sbe\bK$ such that $\bK M=\bK$. Then $M$ is a free $\bD$-module of rank $3$. We are going to describe all ideals of cubic rings up to isomorphism. It is known (see, for instance, \cite{fad}) that there is a one-to-one correspondence between $\bC$-ideals and $\hat\bC$-ideals, mapping $M$ to its $\gM$-adic completion. This correspondence \emph{reflects isomorphisms}, i.e. maps non-isomorphic ideals to non-isomorphic. So, in what follows we may (and will) suppose that $\bD$ is \emph{complete} with respect to the $\gM$-adic topology. 

Recall also that the \emph{embedding dimension} $\ed\bC$ of a local noetherian ring $\bC$ with the maximal ideal $\bJ$ and the residue filed $\fK$ is defined as $\dim_\fK\bJ/\bJ^2$. If $\bC$ is of Krull dimension $1$ and $\ed\bC=2$, $\bC$ is called a \emph{plane curve singularity}. In the geometric case, when $\bC$ contains a subfield of representatives of $\fK$, it actually means that there is a plane curve $C$ such that $\bC$ is the completion of the local ring of a singular point $x\in C$. 

 From the general theory of ramification in finite extensions we see that the following cases can happen:

\begin{description}
  \item[One branch, ramified case]
  $\bL$ is a field, the maximal ideal of $\bA$ equals $\tau\bA$, $\bA/\tau\bA\simeq\fK$ and $t\bA=\tau^3\bA$.

\item[One branch, non-ramified case]
  $\bL$ is a field, the maximal ideal of $\bA$ equals $t\bA$ and $\bA/t\bA=\fK[\bar\th]$ is a cubic extension of the field $\fK$, where $\bar\th$ is a root of an irreducible cubic polynomial $f(x)\in\fK[x]$. 
 
\item[Two branches, ramified case]  
 $\bL=\bK_1\times\bK$, where $\bK_1$ is a quadratic extension of $\bK$, $\bA=\bD_1\times\bD$, the maximal ideal of $\bD_1$ is $\tau\bD_1$, $\bD_1/\tau\bD_1\simeq\fK$ and $t\bD_1=\tau^2\bD_1$.

\item[Two branches, non-ramified case]
 $\bL=\bK_1\times\bK$, where $\bK_1$ is a quadratic extension of $\bK$, $\bA=\bD_1\times\bD$, the maximal ideal of $\bD_1$ is $t\bD_1$ and $\bD_1/\tau\bD_1=\fK[\bar\th]$ is a quadratic extension of the field $\fK$, where $\bar\th$ is a root of an irreducible quadratic polynomial $f(x)\in\fK[x]$. 

\item[Three branches case]
 $\bL=\bK^3$, $\bA=\bD^3$.
\end{description}

 We recall \cite{fac,di} that, for any cubic ring $\bC$, every ideal of $\bC$ is isomorphic either to an \emph{over-ring} of $\bC$, i.e. a cubic ring $\bB$ such that $\bC\sbe\bB\sb\bL$, or to the \emph{dual ideal} $\bB^*=\hom_\bD(\bB,\bD)$ of such an over-ring. Hence, to describe all ideals of $\bC$, we only need to describe over-rings of $\bC$. Obviously, any cubic ring in $\bL$ contains some $\bA_m$. Therefore, to describe all cubic rings (so their ideals as well), we have to describe the over-rings of $\bA_m$. If $\bB$ is an over-ring of $\bC$, they also say that $\bB$ \emph{dominates} $\bC$.

 Since the unique (up to isomorphism) $\bA$-ideal is $\bA$ itself, we proceed by induction: supposing that all over-rings of $\bA_m$ are known, we find all over-rings of $\bA_{m+1}$. If $\bC$ is an over-ring of $\bA_{m+1}$, then $\bB=\bC\bA_m$ is an over-ring of $\bA_m$, $t\bB\sb\bC$ and $\bC/t\bB$ is a $\fK$-subalgebra in $\bB/t\bB$. If $\bB\spe\bA_{m-1}$, then $t\bB\spe\bJ_m$, hence, $\bC\spe\bJ_m+\bD=\bA_m$. Therefore, the following procedure gives all over-rings of $\bA_{m+1}$ which are not over-rings of $\bA_m$:

\begin{proc}
\begin{itemize}
\item[]
\item   For every over-ring $\bB$ of $\bA_m$, which is not an over-ring of $\bA_{m-1}$, calculate $\hB=\bB/t\bB$. Set $\hA=(\bA_m+t\bB)/t\bB\sbe\hB$.
\item  Find all proper subalgebras $\bS\sb\hB$ such that $\hA\bS=\hB$. Equivalently, the natural map $\bS\to\bB/\bB\bJ_m$ must be surjective.
\item  For each such $\bS$ take its preimage in $\bB$. 
\end{itemize}
\end{proc}
 
\section{Calculations}

\subsection{One branch, ramified case}
\label{s21}

 We set
\begin{align*}
  \bC_{2r}(\al)&=\bD+t^r\al\bD+t^{2r}\bA, \text{ where } v(\al)=1,\\
  \bC_{2r+1}(\al)&=\bD+t^r\al\bD+t^{2r+1}\bA, \text{ where } v(\al)=2,
\end{align*} 
 where $v$ is the discrete valuation related to the ring $\bA$, i.e. $v(\al)=k$ means that $\al\in\tau^k\bD\setminus\tau^{k+1}\bD$. Note that $\bC_0(\al)=\bA$.  Obviously, $\al$ can be uniquely chosen as $\tau+a\tau^2$ for $\bC_{2r}$ and as $\tau^2+at\tau$ for $\bC_{2r+1}$, where $a\in\bD$ is defined modulo $t^r$.

 \begin{theorem}\label{21}
  Every over-ring of $\bA_m$ coincides with $t^k\bC_r(\al)+\bD$ for some $k,r$ such that $r+k\le  m$ and some $\al$. The rings $\bC_r(\al)$ are just all plane curve singularities in this case.
\end{theorem}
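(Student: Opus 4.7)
The plan is to prove the first assertion by induction on $m$ using the Procedure of Section 1, and to derive the plane-curve characterisation from the same computation. For the base $m=0$, $\bA_0=\bA$ has only itself as an over-ring, and $\bA=\bC_0(\al)$, matching $(k,r)=(0,0)$.

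For the inductive step, assume the claim for over-rings of $\bA_m$; then the candidate $\bB$'s in the Procedure (over-rings of $\bA_m$ not of $\bA_{m-1}$) are exactly the $t^k\bC_r(\al)+\bD$ with $k+r=m$. For such a $\bB$ take the $\bD$-basis $\{1,e_1,e_2\}$, with $e_1=t^{k+\lfloor r/2\rfloor}\al$ the ``middle'' generator and $e_2$ an element of valuation $\equiv 2\pmod 3$ representing the top residue class. A direct valuation count using $\tau^3=t$ and the normal form $\al=\tau+a\tau^2$ or $\al=\tau^2+at\tau$ splits the analysis into two regimes for $\hB=\bB/t\bB$. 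If $k=0$ (so $\bB=\bC_r(\al)$ with $r\ge 1$), then $e_1^2\equiv e_2$ while $e_1e_2,e_2^2\in t\bB$, so $\hB\cong\fK[e_1]/(e_1^3)$ and $\hA=\fK\oplus\fK e_2$. Any proper subalgebra $\bS\sb\hB$ with $\hA\bS=\hB$ would contain an element of the form $e_1+ce_2$; its square $e_2$ then lies in $\bS$, forcing $e_1\in\bS$ and $\bS=\hB$, a contradiction. If instead $k\ge 1$, all three products $e_1^2,e_1e_2,e_2^2$ lie in $t\bB$, so $\hB\cong\fK\oplus\fK e_1\oplus\fK e_2$ with trivial multiplication on its radical, and every $\fK$-subspace of $\hB$ containing $1$ is automatically a subalgebra.

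In this second regime, $\hA=\hB$ when $\bB=\bA_m$ (the case $r=0$), while $\hA=\fK\oplus\fK e_2$ otherwise. Pulling back the proper subalgebras $\bS$ with $\hA\bS=\hB$ yields: from $\bB=\bA_m$, the rings $\bA_{m+1}$, $t^m\bC_1(\tau^2)+\bD$ and $t^{m-1}\bC_2(\tau+c\tau^2)+\bD$ for $c\in\fK$ (corresponding to the subspaces $V=0$, $V=\fK e_2$, $V=\fK(e_1+ce_2)$ inside the radical); and from each shift $t^k\bC_r(\al)+\bD$ with $k,r\ge 1$, the single family $t^{k-1}\bC_{r+2}(\al')+\bD$, with $\al'$ refining the given $\al$ from the class modulo $t^{\lfloor r/2\rfloor}$ to the class modulo $t^{\lfloor(r+2)/2\rfloor}$ as $c\in\fK$ varies. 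Assembling all contributions over all admissible $\bB$ gives exactly $\{t^{k'}\bC_{r'}(\al')+\bD:k'+r'=m+1\}$, closing the induction.

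The plane-curve characterisation is an immediate by-product: in the maximal ideal $\bJ$ of $\bC_r(\al)$ (for $r\ge 1$) the same relation $e_1^2\equiv e_2\pmod{\bJ^2}$ kills $e_2$ in $\bJ/\bJ^2$, giving $\ed\bC_r(\al)=2$; by contrast $\bA_m$ (for $m\ge 1$) and the shifts $t^k\bC_r(\al)+\bD$ (for $k\ge 1$) have $\ed=3$, so they are not plane curve singularities. The main obstacle is the detailed valuation bookkeeping required to establish the $\hB$-dichotomy in every case and to bring each preimage into the claimed normal form, in particular respecting the uniqueness of $\al'$ modulo $t^{\lfloor r'/2\rfloor}$.
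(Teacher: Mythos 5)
Your proof follows essentially the same route as the paper: apply the Procedure to $\bB=t^k\bC_r(\al)+\bD$ with $k+r=m$, compute $\hB$ on the basis $\{1,t^h\al,t^m\tau^s\}$, rule out proper subalgebras when $k=0$ via $e_1^2\equiv e_2$, and pull back $\fK+\fK(e_1+ce_2)$ to $t^{k-1}\bC_{r+2}(\al')+\bD$ when $k>0$; your explicit treatment of $\bB=\bA_m$ is in fact slightly more careful than the paper's, whose blanket claim ``$t^h\al\notin\bJ_m$'' fails for $r=0$. Two small corrections: the third basis element $e_2=t^m\tau^s$ has valuation $\equiv 2\pmod 3$ only for even $r$ (for odd $r$ one has $s=1$, so $v(e_2)\equiv1$ and it is $e_1$ whose valuation is $\equiv2$), though your substantive relations $e_1^2\equiv e_2$ and $e_1e_2,e_2^2\in t\bB$ do hold in both parities; and the passage from $m=0$ to $m=1$ is not covered by either of your regimes, since $\bB=\bA_0=\bA$ has $k=r=0$ and $\hB\cong\fK[\bar\tau]/(\bar\tau^3)$ with $\hA=\hB$, so this step should be checked directly (its proper subalgebras are $\fK$ and $\fK+\fK\bar\tau^2$, giving $\bA_1$ and $\bC_1(\tau^2)$), exactly the base case the paper disposes of by citation.
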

\begin{proof}
For $m=1$ it is easy and known \cite{dr,ja}. So, we use the Procedure for $m>1$, setting $\bB=t^k\bC_r(\al)+\bD$, where $k+r=m$. Then the basis of $\hB$ consists of the classes of the elements $\{1,t^h\al,t^m\tau^s\}$, where $h=k+[r/2]$, $s\in\{1,2\}$ and $s\equiv r\pmod2$. Since $t^h\al\notin\bJ_m$, the subalgebra $\bS$ necessarily contains the class of $t^h\al+c t^m\tau^s$ for some $c\in\bD$. If $k=0$, then $m=r$ and $v(t^m\tau^s)=2v(t^h\al)$. Therefore, $\hB$ has no proper subalgebra containing  the class of $t^h\al+c t^m\tau^s$. If $k>0$, the preimage of $\bS$  is $\bD+(t^h\al+ct^m\tau^s)\bD+t^{m+1}\bA$. It coincides with $t^{k-1}\bC_{r+2}(\al')+\bD$ where $\al'=\al+ct^{m-h}\tau^s$.

Now one easily checks that $\ed\bC_r(\al)=2$, while $\ed\bC=3$ for all other rings. 
\end{proof}

\subsection{One branch, non-ramified case}
\label{s22}

We set $\bC_r(\al)=\bD+t^r\al\bD+t^{2r}\bA_0$, where $\al\in\bA^\times\setminus\bD$. Again $\bC_0(\al)=\bA_0$. Note that $\al$ can be uniquely chosen as $\th+a\th^2$, where $\th$ is a fixed preimage of $\bar\th$ in $\bD_1$ and $a\in\bD$ is defined modulo $t^r$.

\begin{theorem}\label{22}
  Every over-ring of $\bA_m$ coincides with $t^k\bC_r(\al)+\bD$ for some $k,r$ and $\al$ with $2r+k\le m$. The rings $\bC_r(\al)$ are just all plane curve singularities in this case.
\end{theorem}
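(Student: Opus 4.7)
The approach is to induct on $m$ using the Procedure of Section~\ref{s1}, in direct analogy with Theorem~\ref{21}. The base case $m=0$ is immediate from $\bA_0=\bA=\bC_0(\al)$. For the inductive step, I take an over-ring $\bB$ of $\bA_m$ that is not an over-ring of $\bA_{m-1}$; by the inductive hypothesis $\bB=t^k\bC_r(\al)+\bD$ with $k+2r=m$. A direct manipulation using $t^r\theta=t^r\al-t^r a\theta^2$ shows that $\{1,\,t^{k+r}\al,\,t^{k+2r}\theta^2\}$ is a free $\bD$-basis of $\bB$, together with the key identity $t^{k+2r}\theta=t^r\cdot(t^{k+r}\al)-a\cdot(t^{k+2r}\theta^2)$ inside $\bB$.

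Writing $\bar u,\bar v\in\hB=\bB/t\bB$ for the classes of the last two basis elements, the identity above gives $\overline{t^{k+2r}\theta}=-\bar a\bar v$ whenever $r\ge1$. Using this and the minimal relation $\theta^3+c_2\theta^2+c_1\theta+c_0=0$ to reduce higher powers of $\theta$, I compute the products $\bar u^2,\bar u\bar v,\bar v^2$ in $\hB$. When $k\ge1$ all three land in $t^{k+2r+1}\bA\sbe t\bB$ and vanish, so $\hB\cong\fK[u,v]/(u^2,uv,v^2)$. When $k=0$ (forcing $r=m/2\ge1$) the last two still vanish, but $\bar u^2=\la\bar v$ with $\la\in\fK$; a short basis computation yields $\la=\bar\rho-\bar a\bar\nu$ where $\bar\al^2=\bar\mu+\bar\nu\bar\theta+\bar\rho\bar\theta^2$.

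The main obstacle is verifying $\la\ne0$ in the $k=0$ case, since this is what blocks new over-rings from arising and also underlies the plane-curve-singularity conclusion. I would handle it by introducing the $\fK$-linear functional $\phi\colon\fK[\bar\theta]\to\fK$ with $\phi(1)=0$, $\phi(\bar\theta)=-\bar a$, $\phi(\bar\theta^2)=1$, for which $\la=\phi(\bar\al^2)$. A direct check gives $\phi(\bar\al)=\phi(\bar\theta+\bar a\bar\theta^2)=-\bar a+\bar a=0$, so $\ker\phi=\fK\oplus\fK\bar\al$. Since $\bar\al=\bar\theta(1+\bar a\bar\theta)\notin\fK$ and $\fK[\bar\theta]$ is a field of degree $3$, $\bar\al$ generates it over $\fK$; hence $\{1,\bar\al,\bar\al^2\}$ is a $\fK$-basis and $\bar\al^2\notin\ker\phi$, giving $\la\ne0$.

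With the structure of $\hB$ in hand, the remainder is mechanical. One computes $\hA=(\bA_m+t\bB)/t\bB=\fK\oplus\fK\bar v$, of codimension $1$ in $\hB$, so any $\bS\sbe\hB$ with $\hA\bS=\hB$ must contain some $\bar u+c\bar v$ with $c\in\fK$. When $k=0$, the identity $(\bar u+c\bar v)^2=\la\bar v$ combined with $\la\ne0$ forces $\bar v\in\bS$, hence $\bar u\in\bS$, and $\bS=\hB$: no new over-ring arises. When $k\ge1$, each $c\in\fK$ yields a proper subalgebra $\bS_c=\fK\oplus\fK(\bar u+c\bar v)$, whose preimage in $\bB$ simplifies, via $\al'=\al+ct^r\theta^2$, to $t^{k-1}\bC_{r+1}(\al')+\bD$ with $(k-1)+2(r+1)=m+1$, matching the claimed form. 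Finally, $\bC_r(\al)=\bD[t^r\al]$ since the change-of-basis determinant from $\{1,t^r\al,(t^r\al)^2\}$ to $\{1,t^r\al,t^{2r}\theta^2\}$ is $\la\in\bD^\times$, giving $\ed\bC_r(\al)=2$; for $\bC=t^k\bC_r(\al)+\bD$ with $k\ge1$, every product of two basis elements of $\gM_\bC$ lies in $t^{k+2r+1}\bA\sbe t\gM_\bC$, whence $\gM_\bC^2=t\gM_\bC$ and $\ed\bC=3$.
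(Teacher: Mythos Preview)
Your argument follows the same inductive route as the paper's and supplies considerably more detail---most notably the clean linear-functional verification that $\la\ne0$ when $k=0$, which the paper dispatches with the phrase ``as above, it is impossible if $k=0$.'' The structure computations for $\hB$, the identification of the admissible $\bS_c$ when $k\ge1$, and the embedding-dimension check are all correct.

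There is one small oversight: the case $r=0$, i.e.\ $\bB=\bA_m$ itself. You correctly flag that the identity $\overline{t^{k+2r}\theta}=-\bar a\bar v$ requires $r\ge1$, but then assert $\hA=\fK\oplus\fK\bar v$ unconditionally. For $r=0$ one has $\overline{t^m\theta}=\bar u-\bar a\bar v$, whence $\hA=\hB$; the condition $\hA\bS=\hB$ is then vacuous, and the proper subalgebras $\bS=\fK$ and $\bS=\fK\oplus\fK\bar v$ (which your constraint ``$\bS$ contains some $\bar u+c\bar v$'' excludes) also occur. Their preimages are $\bA_{m+1}=t^{m+1}\bC_0+\bD$ and $t^{m-1}\bC_1(\theta^2)+\bD$ respectively, both of the asserted form, so the theorem is unaffected. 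This is a routine patch---and the paper's own claim ``$t^{r+k}\al\notin\bJ_m$'' is equally false at $r=0$---so your proof is essentially complete modulo this boundary case.
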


\begin{proof}
 For $m=1$ it is obvious. So, using the Procedure for $m>1$, we set $\bB=t^k\bC_r(\al)+\bD$ with $2r+k=m$. Then  a basis of $\hB$ consists of the classes of elements $\{1,t^{r+k}\al,t^m\al^2\}$ for some $\al^2\in\bA^\times\setminus(\bD+\al\bD)$. Since $t^{r+k}\al\notin\bJ_m$, $\bS$ must contain the class of an element $t^{r+k}\al'=t^{r+k}\al+c t^m\al^2$ for some $c\in\bD$. As above, it is impossible if $k=0$. If $k>0$, then the preimage of $\bS$ is $\bD+t^{r+k}\al'+t^{m+1}\bA=t^{k-1}\bC_{r+1}(\al')+\bD$.

Now one easily checks that $\ed\bC_r(\al)=2$, while $\ed\bC=3$ for all other rings.
\end{proof}

\subsection{Two branches, ramified case}
\label{s23}

We denote by $v$ the valuation defined by the ring $\bD_1$, by $e$ the idempotent in $\bA$ such that $e\bA=\bD_1$ and set
\begin{align*}
  \bC_{l,q}(\al)&=\bD+t^l(e+t^q\al)\bD+t^r\bA, \text{ where } r=2l+q,\\
  \bC_r(\al)&=\bD+t^r\al\bD+t^{2r+1}\bA.
\end{align*}
 In both cases $\al\in\bD_1$ and $v(\al)=1$, where $v$ is the valuation defined by the ring $\bD_1$. Obviously, $\al$ can be uniquely chosen as $a\tau$, where $a\in\bD$ is defined modulo $r$.  Note that $\bC_{0,q}(\al)=\bD+e_1\bD+t^q\bA$ are just all decomposable rings in this case and $\bC_{0,0}(\al)=\bA$.

\begin{theorem}\label{23}
  Every over-ring of $\bA_m$ coincides with either $t^k\bC_{l,r}(\al)+\bD$ or $t^k\bC_r(\al)+\bD$, where $k+r\le m$. The rings $\bC_{l,q}(\al)$ and $\bC_r(\al)$ are just all plane curve singularities in this case.
\end{theorem}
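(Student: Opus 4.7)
\emph{Plan of proof.} I would use the same inductive template as in Theorems~\ref{21} and~\ref{22}: induct on $m$, with the base case handled by direct enumeration, and the inductive step carried out via the Procedure. The main new feature in this case is that there are \emph{two} families of candidate rings, so one must track at each step not only the evolution of parameters within each family but also the possibility that applying the Procedure to a member of one family yields a ring in the other.

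For the inductive step, take an over-ring $\bB$ of $\bA_m$ that is not an over-ring of $\bA_{m-1}$; by the inductive hypothesis, $\bB$ is either $t^k\bC_{l,r}(\al)+\bD$ or $t^k\bC_r(\al)+\bD$ with $k+r=m$. In each case I would write an explicit $\bD$-basis of $\bB$ and, reducing modulo $t\bB$, exhibit a basis of $\hB$ having one distinguished element $u$ lying outside $\hA$ and one element $w$ coming from $t^m\bA$. Any subalgebra $\bS\sb\hB$ with $\hA\bS=\hB$ must then contain a class of the form $u+c\,w$ with $c$ ranging over residues in the residue field. The case $k=0$ is ruled out by the same valuation argument as in Sections~\ref{s21} and~\ref{s22}: the square of $u$ already lies outside any proper subalgebra containing $u$. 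For $k>0$, the preimage of $\bS$ in $\bB$ turns out to be a ring of the form $t^{k-1}\bC_{l',r'}(\al')+\bD$ or $t^{k-1}\bC_{r'}(\al')+\bD$, with the new parameters read off mechanically from $l$, $r$, $c$ and the class of $\al$.

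\emph{Main obstacle and conclusion.} The most delicate part is the bookkeeping: because $t\bD_1=\tau^2\bD_1$, the valuations of $t^j$ and $\tau^k$ interleave, and one must separately treat the parity of $q=r-2l$. I would spend the bulk of the proof verifying that each admissible $c$ produces a ring in exactly one of the two families with $k+r$ preserved, and conversely that every ring in the list is obtained this way. Finally, the embedding dimension claim is a direct check: the maximal ideal of $\bC_{l,q}(\al)$ is generated by $t$ together with $t^l(e+t^q\al)$, and that of $\bC_r(\al)$ by $t$ and $t^r\al$, so in both cases $\ed=2$; for any other ring $t^k\bC_{l,r}(\al)+\bD$ or $t^k\bC_r(\al)+\bD$ with $k\ge1$, one can exhibit three elements of the maximal ideal whose images in $\gM/\gM^2$ are linearly independent, giving $\ed=3$.
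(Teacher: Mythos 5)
Your plan follows the paper's proof essentially verbatim: induction on $m$ via the Procedure, a three-element basis of $\hB$ forcing $\bS$ to contain a perturbed copy of the distinguished generator, the valuation argument excluding $k=0$, identification of the preimage as $t^{k-1}\bC_{l+1,q}(\al')+\bD$ (resp.\ the analogue for $\bC_r$), and the direct embedding-dimension check at the end. The only place where you are more cautious than necessary is the worry that the two families might mix under the Procedure: the parity of the valuation of the distinguished generator in the $\bD_1$-component (even, namely $2l$, for $\bC_{l,q}$; odd, namely $2r+1$, for $\bC_r$) is preserved by the perturbation $u\mapsto u+cw$ with $w\in t^m\bA$, so each family is closed under the inductive step.
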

\begin{proof}
  The case $m=1$ is obvious. So, using the Procedure, we suppose that $m>1$ and $k+r=m$. If $\bB=t^k\bC_{l,q}(\al)+\bD$, a basis of $\hB$ consists of the classes of $\{1,t^{k+l}(e+t^q\al),t^m\tau\}$. Since $t^{k+l}(e+t^q\al)\notin\bJ_m$, the subalgebra $\bS$ must contain the classe of $t^{k+l}(e+t^q\al')$ for some $\al'\in\bD_1$  with $v(\al')=1$. Again the case $k=0$ is impossible. If $k>0$, the preimage of $\bS$ coincides with $t^{k-1}\bC_{l+1,q}+\bD$. If $\bB=t^k\bC_r(\al)+\bD$, the calculations are quite similar.

Now one easily checks that $\ed\bC_{l,q}(\al)=\ed\bC_r(\al)=2$, while $\ed\bC=3$ for all other rings.
\end{proof}

\subsection{Two branches, non-ramified case}
\label{s24}

 We set
\begin{align*}
  \bC_{l,q}(\al)&=\bD+t^l(e_1+t^q\al)\bD+t^r\bA,\text{ where } r=2l+q\\
  &\ \text{ and } \al\in\bD_1\setminus(e_1\bD+t\bD).
\end{align*}
 Then $\al$ can be chosen as $a\th$, where $\th$ is a fixed preimage of $\bar\th$ in $\bD_1$ and $a\in\bD$ is uniquely defined modulo $t^l$. Again $\bC_{0,q}(\al)=\bD+e_1\bD+t^q\bA$ are just all decomposable rings in this case. Especially, $\bC_{0,0}(\al)=\bA$.

\begin{theorem}\label{24}
  Every over-ring of $\bA_m$ coincides with one of the rings $t^k\bC_{l,q}(\al)+\bD$, where $k+r\le m$. The rings $\bC_{l,q}(\al)$ are just all plane curve singularities in this case.
\end{theorem}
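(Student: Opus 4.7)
The plan is to argue by induction on $m$, following the template already established in Theorems \ref{21}--\ref{23}. The base case $m=1$ reduces to a direct inspection of which $\bD$-subalgebras lie between $\bD+t\bA$ and $\bA$ in the two-branches non-ramified situation, and is essentially immediate. For the inductive step I would take an arbitrary over-ring $\bB=t^k\bC_{l,q}(\al)+\bD$ of $\bA_m$ with $k+r=m$ (where $r=2l+q$) and apply the Procedure to produce the over-rings of $\bA_{m+1}$ that dominate $\bB$ but do not already dominate $\bA_{m-1}$.

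The first concrete step is to write down a $\fK$-basis of $\hB=\bB/t\bB$. One basis vector is the class of $1$; the second is the class of $t^{k+l}(e_1+t^q\al)$; a third comes from an element of the form $t^m\beta$ for a suitable $\beta\in\bA$ whose class is independent of the first two. I would then identify $\hA=(\bA_m+t\bB)/t\bB$ and note that, since $t^{k+l}(e_1+t^q\al)\notin\bJ_m$, any subalgebra $\bS\sb\hB$ with $\hA\bS=\hB$ must contain a class of the shape $t^{k+l}(e_1+t^q\al')$ where $\al'$ differs from $\al$ by a correction supported on the third basis vector. When $k=0$, the ring $\bB$ is already decomposable, and squaring the element $t^{l}(e_1+t^q\al)$ inside $\hB$ forces the third basis class to already lie in $\bS$, so there is no proper $\bS$; this rules out the $k=0$ case exactly as in Theorems \ref{22} and \ref{23}. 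When $k>0$, the preimage of $\bS$ in $\bB$ is $\bD+t^{k+l}(e_1+t^q\al')\bD+t^{m+1}\bA$, which is visibly of the form $t^{k-1}\bC_{l,q+1}(\al')+\bD$.

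The main obstacle, as I see it, is the bookkeeping around the parameter $\al$: one must show that the perturbation $\al'=\al+ct^{m-k-l}\beta$ can again be chosen as $a'\th$ with $a'\in\bD$ well defined modulo $t^l$, so that the preimage is presented in the canonical form of the $\bC_{l,q+1}$ family and no two rings in the list are produced twice. This is the point where the quadratic extension $\bD_1/\tau\bD_1=\fK[\bar\th]$ enters substantively, since $\bar\th$ plays the role that the uniformiser $\tau$ played in the ramified analogue, and the valuation arithmetic in $\bD_1$ controls which multiplications land in the third basis vector.

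Finally, the plane curve singularity claim is a direct embedding-dimension calculation. Let $\bJ$ denote the maximal ideal of $\bC_{l,q}(\al)$. Then $\bJ$ is generated over $\bC_{l,q}(\al)$ by $t$ and $t^l(e_1+t^q\al)$, and a short verification shows that these two elements remain linearly independent in $\bJ/\bJ^2$, giving $\ed\bC_{l,q}(\al)=2$. For a ring $\bB=t^k\bC_{l,q}(\al)+\bD$ with $k>0$ the same count forces a third independent generator of $\bJ_\bB/\bJ_\bB^2$, whence $\ed\bB=3$, so $\bB$ is not a plane curve singularity.
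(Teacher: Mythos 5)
Your overall strategy is exactly the template the paper uses in the other four cases (the paper omits this proof precisely because it repeats them), and most of your steps are sound: the basis $\{1,\,t^{k+l}(e_1+t^q\al),\,t^m\th\}$ of $\hB$, the observation that $\bS$ must contain a class of the form $t^{k+l}(e_1+t^q\al')$, and the exclusion of $k=0$ by squaring. One aside is wrong, though: for $k=0$ the ring $\bB=\bC_{l,q}(\al)$ is \emph{not} decomposable when $l>0$ (only the rings $\bC_{0,q}$ are decomposable); what actually rules out $k=0$ is the squaring computation itself, which uses the irreducibility of the minimal polynomial of $\bar\th$ over $\fK$, so you should not lean on decomposability there.

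The genuine error is in the identification of the preimage. You claim that $\bD+t^{k+l}(e_1+t^q\al')\bD+t^{m+1}\bA$ is ``visibly of the form $t^{k-1}\bC_{l,q+1}(\al')+\bD$''. It is not: unwinding the definition gives $t^{k-1}\bC_{l,q+1}(\al')+\bD=\bD+t^{k+l-1}(e_1+t^{q+1}\al')\bD+t^{m}\bA$, a strictly larger ring (wrong exponent on the middle generator and conductor $t^m\bA$ instead of $t^{m+1}\bA$). Since $k+l=(k-1)+(l+1)$ and $m+1=(k-1)+2(l+1)+q$, the preimage is in fact $t^{k-1}\bC_{l+1,q}(\al')+\bD$: it is $l$, not $q$, that increments, exactly as in the proof of Theorem~\ref{23}, where the paper records ``the preimage of $\bS$ coincides with $t^{k-1}\bC_{l+1,q}+\bD$''. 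This also corrects your parameter bookkeeping: since $m-k-l-q=l$, the perturbation is $\al'=\al+ct^{l}\th$, so $a'=a+ct^{l}$ is well defined modulo $t^{l+1}$ (as it must be for a member of the $\bC_{l+1,q}$ family), not modulo $t^{l}$ as you assert. As written your induction does not close --- the rings you produce at stage $m+1$ would satisfy $k'+2l'+q'=m$ rather than $m+1$ --- but the repair is the one-line substitution above, after which the argument goes through.
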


We omit the proof in this case, since it practically repeats the calculations in the other cases.

\subsection{Three branches case}
\label{s25}

 We set 
$$
 \bC_{l,q}(\al)=\bD+t^l\al\bD+t^r\bA,
$$
 where $\al=e+t^qae'$, $e\ne e'$ are primitive idempotent in $\bA$, $r=2l+q$, $a\in\bD^\times$ and $a\not\equiv1\!\!\pmod{t}$ if $q=0$. Obviously, $a$ is unique modulo $t^l$. Again $\bC_{0,q}(\al)=\bD+e\bD+t^q\bA$ are just all decomposable rings in this case and $\bC_{0,0}=\bA$. Note also that if $\bC=\bD+t^l\al\bD+t^r\bA$, where $\al=e+ae'$ as above with $a\equiv1\!\!\pmod t$, then, for $a\equiv1\!\!\pmod{t^l}$, $\bC=t^l\bC_{0,q}(1-e-e')+\bD$, and for $a\equiv1\!\!\pmod{t^q}$ with $0<q<l$, $\bC=\bC_{l,q}(\al')$ for some $\al'$.

\begin{theorem}\label{25}
  Every over-ring of $\bA_m$ coincides with $t^k\bC_{l,q}(\al)+\bD$ for some $\al$ and some $l,q$ with $k+r\le m$. The rings $\bC_{l,q}(\al)$ are just all plane curve singularities in this case.
\end{theorem}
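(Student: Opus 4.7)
The plan is to replicate the inductive scheme of Theorems \ref{21}--\ref{24} with minor adaptations, using the identification $\bC_{l,q}(\al)=\bD[t^l\al]$ as a monogenic $\bD$-algebra. This identification follows by direct computation from $\al^2=e+t^{2q}a^2e'$ together with $r=2l+q$; the invertibility of $1-t^qa$ in $\bD$, needed to solve for the leading coefficient, is guaranteed by the normalization $a\not\equiv 1\pmod t$ imposed when $q=0$.

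The base case $m=1$ is a direct inspection: over-rings of $\bA_1$ correspond bijectively to unital $\fK$-subalgebras of $\bA/\bA_1\simeq\fK^3/\fK\cdot 1$, a two-dimensional commutative $\fK$-algebra. Its proper unital subalgebras are either spanned by a single idempotent (producing the decomposable rings of $\bC_{0,0}$-type) or by a single element $\bar e+\bar a\bar e'$ with $\bar a\ne 1$ (producing the $\bC_{1,0}(\al)$-type). For the inductive step take $\bB=t^k\bC_{l,q}(\al)+\bD=\bD+t^{k+l}\al\bD+t^{k+r}\bA$ with $k+r=m$. A $\fK$-basis of $\hB=\bB/t\bB$ consists of the classes of $1,\,t^{k+l}\al,\,t^{k+r}e''$, where $e''$ is the primitive idempotent not appearing in $\al$, while $\hA=(\bA_m+t\bB)/t\bB$ is spanned by the classes of $1$ and $t^{k+r}e''$. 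Hence any subalgebra $\bS$ with $\hA\bS=\hB$ must contain the class of an element of the form $t^{k+l}\al+ct^{k+r}e''$. The case $k=0$ is excluded exactly as in \S\ref{s21}--\ref{s23}: a direct computation shows that the class of the square of this element equals a nonzero multiple of the class of $t^re''$, so the subalgebra it generates is already the whole $\hB$. For $k>0$, the preimage in $\bB$ equals $\bD+(t^{k+l}\al+ct^{k+r}e'')\bD+t^{m+1}\bA$, and after absorbing the $e''$-contribution using the $S_3$-symmetry permuting the primitive idempotents of $\bA$, this takes the canonical form $t^{k-1}\bC_{l+1,q}(\al')+\bD$ for a suitable $\al'$, advancing the induction from $\bA_m$ to $\bA_{m+1}$.

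The embedding dimension claim is immediate from $\bC_{l,q}(\al)=\bD[t^l\al]$: the two elements $1,\,t^l\al$ generate the ring, so $\ed\bC_{l,q}(\al)=2$, while every other ring in the list (those with $k\ge 1$) requires three minimal $\bD$-algebra generators, as one checks by computing $\bJ/\bJ^2$ on the explicit basis. The main obstacle I anticipate is not the inductive step itself but the non-redundancy of the parametrization: the $S_3$-symmetry of $\bA=\bD^3$ makes the labelling of $\al$ overcount, and the rewriting identities in the paragraph preceding the theorem (those identifying certain rings with $a\equiv 1\pmod t$ as already-listed $\bC_{0,q}$-type or $\bC_{l,q'}(\al')$-type rings) must be verified case by case. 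I would record these identifications as a preliminary lemma, after which the families enumerated in the theorem are both exhaustive and pairwise distinct.
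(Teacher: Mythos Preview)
Your strategy is exactly the paper's: the authors explicitly omit the proof of this theorem, saying it ``practically repeats the calculations in the other cases,'' i.e.\ the Procedure-based induction you outline. So on the level of approach there is nothing to compare.

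That said, two points in your write-up need repair before it goes through.

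\textbf{Base case.} Over-rings of $\bA_1$ correspond to unital $\fK$-subalgebras of $\bA/t\bA\simeq\fK^3$ containing the diagonal (not subalgebras of the quotient $\bA/\bA_1$, which is not a ring). The proper $2$-dimensional ones are exactly $\fK\cdot1+\fK\cdot e_i$ for the three primitive idempotents; an element $\bar e+\bar a\bar e'$ with $\bar a\ne0,1$ does \emph{not} generate a $2$-dimensional subalgebra, since $(\bar e+\bar a\bar e')^2=\bar e+\bar a^2\bar e'$ is independent of $1$ and $\bar e+\bar a\bar e'$. Hence for $m=1$ the list is $\bA=\bC_{0,0}$, the three decomposable rings $\bC_{0,1}$, and $\bA_1=t\bC_{0,0}+\bD$. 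No $\bC_{1,0}(\al)$ appears, and the decomposable rings are $\bC_{0,1}$, not $\bC_{0,0}$.

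\textbf{Inductive step.} For $k\ge1$ one checks that all products of $\overline{t^{k+l}\al}$ and $\overline{t^me''}$ vanish in $\hB$, so $\hB\simeq\fK[x,y]/(x,y)^2$ and every $2$-dimensional subspace through $1$ is a subalgebra. The preimage for the perturbation parameter $c=0$ is indeed $t^{k-1}\bC_{l+1,q}(\al)+\bD$. For $c\ne0$ the generator becomes $t^{k+l}(e+t^qae'+ct^{l+q}e'')$, which is \emph{not} of the form $e_*+t^{q'}b\,e_*'$ in any obvious relabelling when $l\ge1$ and $q\ge1$; your appeal to ``absorbing the $e''$-contribution using the $S_3$-symmetry'' is precisely the step that must be spelled out. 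In the one- and two-branch cases the analogous absorption is automatic because the perturbation lands in the same direction as $\al$; here it lands in the third idempotent and one must show the resulting ring still matches some $t^{k'}\bC_{l',q'}(\be)+\bD$. This is where the rewriting identities you flag as a ``preliminary lemma'' actually enter, and they need to be stated and checked rather than deferred. (The special sub-case $l=0$, where $\bB=t^k\bC_{0,q}+\bD$, also behaves differently: here the $c=0$ preimage is $t^k\bC_{0,q+1}+\bD$, increasing $q$ rather than $l$.)

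Your observation that $\bC_{l,q}(\al)=\bD[t^l\al]$ and hence $\ed=2$ is correct and is the cleanest way to get the plane-curve-singularity claim.
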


We also omit the proof in this case, since it practically repeats the calculations in the other cases.

\subsection{Table of plane curve cubic singularities}
\label{s26}

 We present in Table 1 below all plane curve cubic singularities. In this table $s$ is the number of branches, $^*$ marks the unramified cases (related to the residue field extensions, hence impossible if $\fK$ is algebraically closed); $x,y$ are generators of the maximal ideal, $v(a)$ denotes the \emph{multivaluation} of an element $a\in\bA$, i.e. the vector of valuations of its components with respect to the decomposition of $\bA$ into the product of discrete valuation rings. The column ``type'' shows the correspondence with the Arnold's classification \cite[\S\,15]{avg}. If $\chr\fK=0$ and $\bA$ is ramified, it actually shows the place of the rings in this classification. If $\chr\fK=0$ and $\bC$ is non-ramified, it shows the place of the ring in this classification after the natural extension of the field $\fK$. The validation of this column is given in \cite[Section 2.3]{dg}. Note that, following \cite{dg}, we denote by $E_{l,q}$ the singularities $J_{l,q}$ in the sense of \cite{avg}. Such notations seem more uniform. Note also that the singularities of types $E_1$ and $E_2$ are actually not cubic, but quadratic, and coincide with those of types $A_1$ and $A_2$ of \cite{avg}. Finally, the last column, ``$\pa$'' shows the number of parameters $p$ from the residue field $\fK$ which define a unique ring of this type. We will consider this value in the last section. It does not coincide with the \emph{modality} in the sense of \cite{avg}; the latter equals $p-1$. 

\begin{table}[t]
\caption{}{} \vskip-3ex
 \[
\begin{array}{|c|c|c|c|c|c|}
\hline
 &&&&&\\
s  & \text{name} & v(x) & v(y) & \text{type} & \, \pa \, \\
 &&&&&\\
\hline
 &&&&&\\
1 &  \bC_{2r}(\al) & (3) & (3r+1) & E_{6r} & r\\
  &  \bC_{2r+1}(\al) & (3) & (3r+2) & E_{6r+2} & r\\
 &&&&&\\
\hline
 &&&&&\\
1^* & \bC_r(\al) & (1) & (r) & E^*_{r,0} & r\\ 
 &&&&&\\ 
\hline
 &&&&&\\
2 &  \bC_r(\al) & (2,1) & (2r+1,\8) & E_{6r+1} & r\\
  & \bC_{l,q}(\al) & (2,1) & (2l,\8) & E_{l,2q+1} & l \\
 &&&&&\\
\hline
 &&&&&\\
2^* & \bC_{l,q}(\al) & (1,1) & (l,\8) & E^*_{l,2q} & l\\
 &&&&&\\
\hline
 &&&&&\\
3 & \bC_{l,q}(\al) & (1,1,1) & (l,l+q,\8) & E_{l,2q} & l\\
 &&&&&\\
\hline
\end{array}    
\] \vskip-1ex
\end{table}

\begin{erem}
  The \emph{tame} cubic plane curve singularities $T_{3,q}\ (q\ge6)$ \cite{d1,dg2} coincide with those of types $E_{2,q-6}$.
\end{erem}

\section{Ideals}
\label{s3}

 As we have mentioned above, every ideal of a cubic ring $\bC$ is isomorphic either to an over-ring $\bB\spe\bC$ or to its dual $\bB^*=\hom_\bD(\bB,\bD)$. If $\bC$ is \emph{Gorenstein} (for instance, if it is a plane cubic singularity), then $\bC^*\simeq\bC$, thus $\bB^*\simeq\hom_\bC(\bB,\bC)$. Therefore, to calculate $\bB^*$, one has to choose a Gorenstein subring $\bC\sbe\bB$ and to calculate
\[
 \hom_\bC(\bB,\bC)\simeq\{\la\in\bL\,|\,\la\bB\sbe\bC\}=\{\la\in\bC\,|\,\la\bB\sbe\bC\} 
\]
 (the latter equality holds since $1\in\bB$). This remark easily leads to the following result.

\begin{theorem}\label{dual}
 The duals to the cubic rings are as follows:
  \begin{description}
\item[One branch ramified case] If $\,\bB=\bD+t^k\bC_r(\al)$, then $\bB^*\simeq \bD+t^{[r/2]}\al\bD+t^{k+r}\bA$.

\item[One branch non-ramified case] If $\,\bB=\bD+t^k\bC_r(\al)$, then $\bB^*\simeq\bD+t^r\al\bD+t^{k+2r}\bA$.

\item[Two branches ramified case]
     \begin{enumerate}
     \item If $\,\bB=\bD+t^k\bC_{l,q}(\al)$, then $\bB^*\simeq\bD+t^l(e+t^q\al)\bD+t^{k+2l+q}\bA$.
     \item   If $\,\bB=\bD+t^k\bC_r(\al)$, then $\bB^*\simeq\bD+t^r\al\bD+t^{k+2r+1}\bA$.
     \end{enumerate}

\item[Two branches non-ramified case]  If $\,\bB=\bD+t^k\bC_{l,q}(\al)$, then $\bB^*\simeq\bD+t^l(e+t^q\al)\bD+t^{k+2l+q}\bA$.

\item[Three branches case]  If $\,\bB=\bD+t^k\bC_{l,q}(\al)$, then $\bB^*\simeq\bD+t^l\al\bD+t^{k+2l+q}\bA$.
\end{description}
\end{theorem}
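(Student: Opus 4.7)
The plan is to apply uniformly the Gorenstein trick sketched just before the statement: for each over-ring $\bB$ we exhibit a plane curve singularity $\bC\sbe\bB$, so that $\bC^*\simeq\bC$ and consequently
\[
\bB^*\simeq\hom_\bC(\bB,\bC)=\{\la\in\bC : \la\bB\sbe\bC\}.
\]
By Theorems \ref{21}--\ref{25} we already know which rings are plane curve singularities, and in each case this list contains a natural candidate: the ``$\bC_r(\al)$'' or ``$\bC_{l,q}(\al)$'' that appears in the description of $\bB$, with its $\bA$-index enlarged enough to absorb the prefactor $t^k$.

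For concreteness I would work out the one-branch ramified case in full. Unfolding $\bB=\bD+t^k\bC_r(\al)$ yields the $\bD$-basis $\{1,\,t^{k+[r/2]}\al,\,t^{k+r}\tau^s\}$, with $s\in\{1,2\}$ matching the parity of $r$. Taking $\bC=\bC_{r+2k}(\al)$, a direct comparison of bases gives $\bC\sbe\bB$ and shows that the quotient $\bB/\bC$ is cyclic, generated by the class of $t^{k+r}\tau^s$. The condition $\la\bB\sbe\bC$ therefore collapses to the single requirement $\la\,t^{k+r}\tau^s\in\bC$; expanding $\la$ in the $\bD$-basis of $\bC$ and using $\tau^3=(\text{unit})\cdot t$ produces exactly one nontrivial inequality on valuations, forcing the constant coefficient of $\la$ to lie in $t^k\bD$. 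The resulting fractional ideal $t^k\bD+t^{k+[r/2]}\al\bD+t^{2k+r}\bA$ is then isomorphic to $\bD+t^{[r/2]}\al\bD+t^{k+r}\bA$ via multiplication by $t^{-k}\in\bL^\times$, which is the asserted form.

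The other four cases proceed in exactly the same style. The Gorenstein subring is chosen from the list of plane curve singularities in the corresponding Theorem \ref{22}--\ref{25}, and the computation of $\{\la\in\bC:\la\bB\sbe\bC\}$ amounts once again to matching valuations (or multivaluations, in the multi-branch cases) on a single generator of $\bB/\bC$, followed by normalization by a suitable element of $\bL^\times$. The answer each time has the shape asserted.

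The main obstacle is not conceptual but combinatorial: one has to carry along the parity distinction between $\bC_{2r}$ and $\bC_{2r+1}$ in \S\ref{s21}, the two-index families $\bC_{l,q}(\al)$ with their interplay between $l$ and $q$ in \S\ref{s23}--\S\ref{s25}, and the idempotent decompositions in the multi-branch settings. Because the structure of the argument is the same in each case, it is reasonable to collect them into a single statement.
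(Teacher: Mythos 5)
Your proposal is correct and follows the paper's own route: choose a plane‑curve (hence Gorenstein) subring $\bC\sbe\bB$, compute $\{\la\in\bC:\la\bB\sbe\bC\}$ by matching (multi)valuations on the single generator of $\bB/\bC$, and normalize by $t^{-k}$; the paper simply works out the two‑branch ramified case as its sample computation instead of the one‑branch ramified one. Your choice $\bC=\bC_{r+2k}(\al)$ is in fact the more careful normalization of the index — the paper's stated $\bC_{k+r}(\al)$ would, with its own indexing conventions, fail to be contained in $\bB$ in the one‑branch ramified case — so your version is, if anything, slightly tighter.
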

\begin{proof}
  The proof is immediate if we choose for a Gorenstein subring $\bC\sbe\bB$ the plane curve singularity $\bC=\bC_{k+r}(\al)$ or $\bC_{k+l,q}(\al)$ depending on the shape of $\bB$. For instance, in two branches ramified case, if $\bB=\bD+t^k\bC_{l,q}(\al)$ and $\bC=\bD+\bC_{l+k,q}(\al)$, then 
\begin{align*}
\bB^*&\simeq \{\la\in\bC\,|\,\la\bB\sbe\bC\}=t^k\bD+t^{k+l}(e+t^q\al)\bD+t^{2k+2l+q}\bA \\  
  & \simeq\bD+t^l(e+t^q\al)\bD+t^{k+2l+q}\bA.
\end{align*}
\vskip-1.5em
\end{proof}

\begin{corol}\label{gorenstein}
  If a cubic ring is Gorenstein, it is a plane curve singularity.
\end{corol}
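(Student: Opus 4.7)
The plan is to deduce the corollary from Theorem \ref{dual} via a single numerical isomorphism invariant. By Theorems \ref{21}--\ref{25} every cubic ring has the form $\bB=\bD+t^k\bC$, where $\bC$ is a plane curve singularity ($\bC_r(\al)$ or $\bC_{l,q}(\al)$ in the notation of Section 2), and $\bB$ is itself a plane curve singularity \iff $k=0$. Setting $k=0$ in each of the five cases of Theorem \ref{dual} gives $\bB^*=\bB$ literally, so plane curve singularities are Gorenstein (one also sees this directly: each is a one-dimensional complete intersection in a regular local ring of dimension two). The content of the corollary is therefore the contrapositive: if $k\ge1$, then $\bB$ is not Gorenstein.

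To rule this out, I will show $\bB^*\not\simeq\bB$ as $\bB$-modules, so that no $\la\in\bL^\times$ satisfies $\bB^*=\la\bB$. The invariant of choice is $\ell(M):=\dim_\fK M/(M{:}\bA)$, where $(M{:}\bA):=\{\mu\in\bL\mid \mu\bA\sbe M\}$ is the largest $\bA$-submodule of $M$. Since $(\la M{:}\bA)=\la(M{:}\bA)$, multiplication by $\la$ identifies $M/(M{:}\bA)$ with $\la M/(\la M{:}\bA)$; hence $\ell$ depends only on the $\bB$-module isomorphism class of the fractional ideal $M$, and Gorensteinness of $\bB$ would force $\ell(\bB)=\ell(\bB^*)$.

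Now Theorem \ref{dual} shows that, in each of the five cases, $\bB$ and $\bB^*$ have the same outer $\bA$-term (hence the same conductor to $\bA$), while the middle generator of $\bB$ carries $k$ additional factors of $t$ relative to that of $\bB^*$. Writing each of $\bB,\bB^*$ as a direct sum of three rank-one $\bD$-submodules in the canonical $\bD$-basis of $\bA$ displayed in Section 2 and reading off the quotient by the conductor, a short $\bD$-length count yields $\ell(\bB^*)-\ell(\bB)=k$ in the one-branch cases, with the obvious branch-wise analogue in each multi-branch case. When $k\ge1$ this difference is strictly positive, contradicting $\ell(\bB)=\ell(\bB^*)$ and thereby the Gorenstein property.

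The only real obstacle is carrying out this length calculation uniformly across the five cases of Theorem \ref{dual}; but the $\bD$-bases of $\bB$ and of the canonical representatives of $\bB^*$ are all explicit in Section 2 and in Theorem \ref{dual}, so the remaining bookkeeping is mechanical.
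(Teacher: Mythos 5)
Your proposal is correct and follows the route the paper intends: the corollary is read off from Theorem \ref{dual} by showing that $\bB\not\simeq\bB^*$ whenever $k\ge1$, and your invariant $\ell(M)=\dim_\fK M/(M{:}\bA)$ supplies precisely the isomorphism-invariance argument that the paper leaves implicit. One small caution: your parenthetical ``same outer $\bA$-term, hence the same conductor to $\bA$'' is not a valid inference as stated, since in the multi-branch cases the conductor $(\bB{:}\bA)$ is strictly larger than the displayed term $t^{k+r}\bA$ (e.g.\ for $\bB=\bD+t^{k+l}\al\bD+t^{k+r}\bA$ in the three-branch case one finds $(\bB{:}\bA)=t^{k+2l}\bD\times t^{k+r}\bD\times t^{k+r}\bD$); the equality $(\bB{:}\bA)=(\bB^*{:}\bA)$ does hold in every case, but it must be checked directly, after which the bookkeeping collapses to $\ell(\bB^*)-\ell(\bB)=\dim_\fK(\bB^*/\bB)=k$ because $\bB\sbe\bB^*$ with quotient $t^l\al\bD/t^{k+l}\al\bD\simeq\bD/t^k\bD$ (and its analogues).
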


 Note that it is no more the case for the extensions of bigger degrees. For instance, the rings $P_{pq}$ from \cite{dg2}, which are quartic, are Gorenstein (they are complete intersections) but of embedding dimension $3$. 

\section{Geometric case. Number of parameters}
\label{s4}

 In this section we suppose that our rings are of \emph{geometric nature}, i.e. $\bD=\fK[[t]]$, where $\fK$ is algebraically closed. Then one can consider the \emph{number of parameters} $\pa(\bC)$ defining $\bC$-ideals (see \cite[Section 2.2]{d1} or \cite[Section 3]{dg1}, where it is denoted by $\pa(1;\bC,\bA)$). Actually, it coincides with the minimal possible number $p$ for which there is a finite set of \emph{families of ideals} $\kI_k\ (1\le k\le m)$ of dimensions at most $p$ such that every $\bC$-ideal is isomorphic to one belonging to some family $\kI_k$. Equivalently, it is the maximal possible $p$ such that is a $p$-dimensional family of ideals $\kI$ where every isomorphism class of ideals only occurs finitely many times. In \cite{dg} a criterion was established in order that $\pa(\bC)\le1$. For cubic rings it means that $\bC$ dominates a singularity of type $E_m\ (18\le m\le 20)$   or $E_{3,i}$. The following results give the exact value of $\pa(\bC)$ for all cubic rings of geometric nature. (Note that no unramified case can occur for such rings.)

\begin{theorem}\label{param}
  If $\bC$ is a cubic ring of geometric nature, $\pa(\bC)\le n$ \iff $\bC$ dominates one of the singularities of type $E_{12n+i}\ (6\le i\le 8)$ or $E_{2n+1,q}\ (q\ge0)$.
\end{theorem}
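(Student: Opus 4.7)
The plan is to reduce $\pa(\bC)$ to the explicit lattice of over-rings of $\bC$ via Theorems \ref{21}, \ref{23}, \ref{25} combined with the duality of Theorem \ref{dual}. Every ideal of $\bC$ is isomorphic to either an over-ring $\bB\supseteq\bC$ in $\bL$ or to its dual $\bB^*$; thus $\pa(\bC)$ is the largest dimension of an algebraic family of pairwise non-isomorphic rings arising in this way. A basic monotonicity follows: when $\bC\supseteq\bC'$, every over-ring of $\bC$ is an over-ring of $\bC'$, and the $\bC$- and $\bC'$-isomorphism relations on fractional ideals both agree with scaling by $\bK^{\times}$, so $\pa(\bC)\le\pa(\bC')$.

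The central step is to show that $\pa(\bC')=n$ for each distinguished plane curve singularity $\bC'$ in the statement. For example take $\bC'=\bC_{2r}(\al_0)$ with $r=2n+1$, of type $E_{12n+6}$. By Theorem \ref{21}, every over-ring of $\bC'$ has the normal form $\bD+t^s\al'\bD+t^{s+t}\bA$ for admissible discrete data $(s,t)$; imposing containment of $\bC'$ forces the continuous parameter $\al'$ into a $\fK$-subspace of dimension at most $n$, and the bound is attained by the family $\bC_{2n+1}(\al')$ of plane curve over-rings (whose $\al'$ varies freely in $\bD$ modulo $t^n$). By Theorem \ref{dual} each dual $\bB^*$ reappears as a ring of the same normal form, so duals contribute no larger family. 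Hence $\pa(\bC')=n$; the cases $E_{12n+7}$, $E_{12n+8}$, $E_{2n+1,q}$ are handled analogously using Theorems \ref{23} and \ref{25}.

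The ``if'' direction now follows by monotonicity. For the converse, if $\bC$ dominates none of the distinguished rings, we produce an over-ring $\bC''\supseteq\bC$ with $\pa(\bC'')\ge n+1$: in each of the three ramified branch configurations, $\bC$ is contained in some plane curve singularity $\bC''$ whose relevant index strictly exceeds the threshold of the list ($r\ge 2n+2$ in one-branch ramified, $l\ge 2n+2$ in the two- and three-branches cases). The same computation applied to $\bC''$ yields an $(n+1)$-dimensional family of over-rings of $\bC''$, which are also over-rings of $\bC$, so $\pa(\bC)\ge\pa(\bC'')\ge n+1$.

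The principal obstacle is the embedding-dimension-$3$ case $\bC=t^k\bC_r(\al)+\bD$ with $k>0$, which is not itself a plane curve singularity: one must verify that the shift by $t^k$ does not distort the parameter dimension among the over-ring families, and correctly identify the plane curve singularity $\bC''$ used in the converse. A secondary technical point is to confirm across all three branch configurations that the duals produced by Theorem \ref{dual} never enlarge the parameter count, which reduces to direct comparison of the normal shapes $\bD+t^s\al\bD+t^{s'}\bA$.
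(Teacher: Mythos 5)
Your overall framework (reduce everything to families of over-rings via Theorems \ref{21}--\ref{25}, observe that the duals of Theorem \ref{dual} cannot enlarge the parameter count, and use the monotonicity $\bC\spe\bC'\Rightarrow\pa(\bC)\le\pa(\bC')$) agrees with the paper's, and your ``if'' direction is essentially the paper's computation: for an over-ring $\bD+t^k\bC_{2m+q}(\be)$ of $\bC_{2r+q}(\al)$ the congruence conditions on $\be$ leave $\min(m,r-m-k)\le[r/2]=n$ free parameters. (Minor point: isomorphisms of ideals are given by multiplication by elements of $\bL^\times$, not $\bK^\times$; this does not affect the monotonicity.)

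The converse, however, contains a genuine gap. You claim that a ring $\bC$ dominating none of the listed singularities is contained in a \emph{single plane curve singularity} $\bC''$ whose index exceeds the threshold, and you extract the $(n+1)$-parameter family from the over-rings of that one ring. No such $\bC''$ exists in general. Take $\bC=\bA_{2n+2}=\bD+t^{2n+2}\bA$ in the one-branch ramified case: it dominates no $E_{12n+6}$ or $E_{12n+8}$ (its value semigroup misses $6n+4$ and $6n+5$), yet $t^{2n+2}\tau^2$ has valuation $6n+8$, while every element of $\bC_{2R}(\be)$ of valuation $\equiv2\pmod3$ has valuation at least $6R+2$; hence $\bA_{2n+2}\sbe\bC_{2R}(\be)$ forces $R\le n+1$, and every plane curve singularity containing $\bA_{2n+2}$ has $\pa\le[(n+1)/2]\le n$. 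So the single-witness strategy cannot produce the lower bound. The correct witness is not one ring but a family: from ``dominates nothing on the list'' one deduces that $\bC$ has no element of valuation below $6n+6$ outside $3\mathbb{Z}$, hence $\bC\sbe\bA_{2n+2}$, and then the $(n+1)$-parameter family $\{\bC_{2n+2}(\al)\}_{\al}$ of pairwise distinct (hence pairwise non-isomorphic) plane curve singularities, each containing $\bA_{2n+2}$ and therefore each an over-ring of $\bC$, gives $\pa(\bC)\ge n+1$. Equivalently, the single intermediate ring to use is $\bA_{2n+2}$ itself, which has embedding dimension $3$ and is not of the plane-curve form your argument insists on. The same correction is needed in the two- and three-branch cases.
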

\begin{proof}
 Certainly, we have to prove that
\begin{enumerate}
\item  every ring of one of the listed types have at most $n$-parameter families of ideals;
\item  if $\bC$ dominates no ring of the listed types, it has $(n+1)$-parameter families of ideals.
\end{enumerate}

 Since the calculations in all cases are similar, we only consider the one branch ramified case. Note first that the rings $\bC_{2r}(\al)$ as well as $\bC_{2r+1}(\al)$ form a $r$-parametric family. Indeed, we can choose in the first case $\al=\tau+a\tau^2$, and in the second one $\al=\tau^2+a\tau^4$, where $a\in\bD$ is defined modulo $t^r$, and such a presentation is unique. The same is true also for $t^k\bC_{2r}(\al)+\bD$ and $t^k\bC_{2r+1}(\al)+\bD$ for any $k$. Since $\bC_{2r}(\al)\spe\bA_{2r}$ for all $\al$, we get $\pa(\bA_{2r})\ge r$.

 Let $\bC$ dominate neither a ring of type $E_{12n+6}$, i.e. $\bC_{4n+2}(\al)$, nor a ring of type $E_{12n+8}$, i.e. $\bC_{4n+3}(\al)$. Then it contains no element of valuation smaller than $6n+6$, so $\bC\sbe\bA_{2n+2}$. Hence, $\pa(\bC)\ge n+1$.

 On the other hand, consider the ring $\bC_{2r+q}(\al)$, where $q\in\{0,1\}$. Its over-rings are of the kind $\bD+t^k\bC_{2m+q}(\be)$, where $k+m\le r$ and $k+2m\le 2r$. Moreover, let $\al=\tau^{q+1}+a\tau^{2q+2}$ and $\be=\tau^{q+1}+b\tau^{2q+2}$. Then $b$ is defined modulo $t^m$ and $b\equiv a\!\!\pmod{t^{r-m-k}}$. Therefore, the over-rings with the fixed $m,k$ form a $p$-parameter family, where $p=\min(m,r-m-k)$. Hence, $2p\le r$ and $p\le[r/2]$. If we set $r=2n+1$, we get that $\pa(\bC_{4n+2}(\al))\le n$ and $\pa(\bC_{4n+3}(\al))\le n$ for all possible $\al$. It accomplishes the proof.
\end{proof}

 Obvious considerations give the number of parameters for special rings.

\begin{corol}\label{cor}
\[
   \begin{split}
 \pa(\bC_r(\al))&=[r/2],\\
 \pa(\bC_{l,q}(\al))&=[l/2],\\
 \pa(\bA_m)&=[m/2].
\end{split} 
\]
\end{corol}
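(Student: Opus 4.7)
The plan is to derive all three identities directly from Theorem~\ref{param}, which characterises $\pa(\bC)\le n$ by the existence of a plane curve singularity of prescribed type inside $\bC$. Concretely, $\pa(\bC)$ equals the smallest $n$ for which $\bC$ contains a subring of type $E_{12n+6}$, $E_{12n+7}$, $E_{12n+8}$, or $E_{2n+1,q}$ with $q\ge 0$. I carry this out separately for the three families $\bA_m$, $\bC_r(\al)$, $\bC_{l,q}(\al)$.

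For $\bA_m=\bD+t^m\bA$ the criterion reduces to a pair of valuation inequalities. Testing $\bC_{4n+2}(\al')\sbe\bA_m$, the summand $t^{4n+2}\bA$ lies in $t^m\bA$ iff $4n+2\ge m$, whereas the generator $t^{2n+1}\al'$ of valuation $6n+4$ lies in $t^m\bA$ iff $6n+4\ge 3m$. The latter is the binding condition and forces $n\ge[m/2]$. The remaining admissible subring types in each branch case yield the same threshold by an identical check, so the smallest $n$ is exactly $[m/2]$. The matching lower bound $\pa(\bA_m)\ge[m/2]$ is exhibited by the family of over-rings $\bC_{2[m/2]}(\be)\spe\bA_m$ parametrised by $\be$, which is exactly the family already used in the proof of Theorem~\ref{param}.

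For $\bC_r(\al)$ and $\bC_{l,q}(\al)$ the calculation is structurally identical: one tests the admissible subring types against the generators of the containing ring via the same valuation bookkeeping, and the binding condition once again produces the threshold $[r/2]$, respectively $[l/2]$. The realising families are the ones provided by the appropriate structure theorem among \ref{21}--\ref{25}: in each case an $[r/2]$- (resp.\ $[l/2]$-)parameter family of over-rings, non-isomorphic as $\bC$-ideals by the uniqueness built into the presentation of $\al$ in Sections~\ref{s21}--\ref{s25}.

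I expect the main practical obstacle to be the case analysis on branch structure combined with tracking the indexing conventions: what the corollary calls $\bC_r(\al)$ corresponds to indices $2r$ or $2r+1$ in the one-branch ramified case of Section~\ref{s21} and to $r$ in the other cases, and only with this matching do the formulas $[r/2]$ and $[l/2]$ come out uniformly across all branch cases. Once this bookkeeping is straightened out, the individual valuation inequalities in each branch case are routine, and the three equalities follow at once.
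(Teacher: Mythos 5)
Your proposal is correct and follows essentially the same route as the paper: the corollary is meant to be read off from Theorem~\ref{param} together with the explicit family-dimension counts in its proof, which is exactly what you do by computing, via the valuation inequalities, the minimal $n$ for which $\bC$ dominates one of the listed plane curve singularities, and by citing the realising families for the lower bound. Your observation that the corollary's $\bC_r(\al)$ must be read with $r$ equal to the ``half-index'' of the rings $\bC_{2r}(\al)$, $\bC_{2r+1}(\al)$ in the one-branch ramified case (while matching the literal index in the other cases) correctly resolves the only genuine subtlety in deducing the uniform formulas $[r/2]$, $[l/2]$, $[m/2]$.
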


\end{document}